\documentclass[11pt,letterpaper]{article}
\usepackage[margin=0.9in]{geometry}
\usepackage[T1]{fontenc}
\usepackage{lmodern,microtype}
\usepackage{authblk}
\usepackage{mathtools,amssymb,amsthm}
\usepackage{booktabs,array}
\usepackage{tikz}
\usepackage{float}
\tikzset{
  ex A/.style={circle,draw=black!85,fill=black!85,text=white,
    minimum size=6mm,inner sep=1pt,font=\small},
  ex B/.style={ex A,fill=white,text=black!85},
  ex edge/.style={draw=black!30,line width=.55pt},
  ex matching/.style={draw=teal!65!black,line width=1pt},
  ex text/.style={anchor=west,font=\small,inner sep=0pt}
}
\usetikzlibrary{arrows.meta,decorations.markings}
\usepackage[hidelinks]{hyperref}
\hypersetup{pdftitle={Resolving Pisanski's Conjecture on Quadrilateral Embeddings},pdfauthor={Alexander Metzger and Austin Ulrigg}}
\newtheorem{theorem}{Theorem}[section]
\newtheorem{lemma}[theorem]{Lemma}
\newtheorem{proposition}[theorem]{Proposition}

\theoremstyle{definition}
\newtheorem{definition}[theorem]{Definition}

\theoremstyle{remark}

\newcommand{\Z}{\mathbb Z}

\newcommand{\ga}{\gamma}
\newcommand{\Dih}{\mathrm D}
\title{Resolving Pisanski's Conjecture on Quadrilateral Embeddings}\author[1]{Alexander Metzger}
\author[2]{Austin Ulrigg}
\affil[1]{University of California, Berkeley,
\texttt{alexmetzger@berkeley.edu}}
\affil[2]{University of Washington,
\texttt{austinul@uw.edu}}
\date{}

\begin{document}
\maketitle

\begin{abstract}
We prove Pisanski's conjecture on quadrilateral embeddings of regular bipartite graphs. For integers $m\geq d\geq4$ with $4\mid md$, we construct both a connected simple $d$-regular bipartite graph $G$ with $m$ vertices in each bipartition class and an orientable quadrangulation. This graph's genus is
\[
\ga(G)=1+\frac{m(d-4)}4.
\]
These quadrangulations attain the lower bound on the genus from Euler's formula. We give
dihedral constructions for even $m$ and a cyclic construction
for every $m\geq d$ with $4\mid d$. In each case, two cyclic
permutations of a connection set of size $d$ determine all of the
local rotations.
\end{abstract}

\section{Introduction}\label{sec:introduction}

In 1980, Toma\v{z} Pisanski conjectured the following for positive integers $m$ and $d$. Whenever $m\geq d\geq4$ and $4\mid md$, there exists a connected $d$-regular bipartite graph, with $m$ vertices in each class, that has an orientable quadrangulation~\cite[Conjecture~1]{Pisanski1980}. We prove this conjecture by constructing an orientable quadrangulation for every such pair $(m,d)$.

For a graph $G$, let $\ga(G)$ denote its orientable genus. If a $d$-regular bipartite graph with $m$ vertices in each class has an orientable quadrangulation, the surface has genus $1+m(d-4)/4$, attaining the lower bound on the graph's genus from Euler's formula~\cite[p.~33]{Pisanski1980}. Having $d$ distinct neighbors in the opposite class also requires $m\geq d$. The surface's genus must be an integer, so $4\mid md$ is necessary. Pisanski's conjecture asks whether these necessary conditions are sufficient when $d\geq4$.

Many of the graph families whose orientable genus is known are highly symmetric. In 1965, Ringel determined the genus of the complete bipartite graphs~\cite{Ringel1965}. In particular, $K_{m,m}$ has an orientable quadrangulation for every even $m$. Mohar, Parsons and Pisanski determined the orientable genus of nearly complete bipartite graphs~\cite{MoharParsonsPisanski1985}. For a perfect matching $M$ in $K_{m,m}$, they proved
\[
\ga(K_{m,m}-M)=\left\lceil\frac{(m-1)(m-4)}4\right\rceil
\qquad(m\geq3).
\]
When $m\equiv0,1\pmod4$, the expression inside the ceiling is an integer and the bound is attained by a quadrangulation. For $m\geq5$, these give cases of Pisanski's conjecture with $d=m-1$. Alpert and Gross~\cite{AlpertGross1976} and Anderson~\cite{Anderson1981} proved further partial cases. Anderson proved existence when $m$ and $d$ are both even, and when $m$ is odd with $d\equiv4\pmod8$ and $d\geq12$. More recently, Ulrigg and Metzger~\cite{MetzgerUlrigg2026} determined the genus of an infinite family of bipartite Kneser graphs, giving explicit quadrangulations that realize the bound in Pisanski's conjecture. 

The constructions of this paper exploit group-invariant rotation systems, where we choose the group and connection set together with the local rotations. These freedoms allow us to ensure that every facial walk closes after exactly $4$ steps while ensuring that both permutations are cyclic.

\begin{theorem}[Main theorem]\label{thm:main}
Let $m$ and $d$ be integers with $m\geq d\geq4$. There exists a connected simple $d$-regular bipartite graph with $m$ vertices in each bipartition class and a cellular orientable embedding whose face boundaries are simple $4$-cycles if and only if $4\mid md$. Whenever $4\mid md$, the graph $G$ constructed below has minimum orientable genus
\[
\ga(G)=1+\frac{m(d-4)}4.
\]
\end{theorem}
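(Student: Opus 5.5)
The proof splits into necessity, which is a direct count, and sufficiency, which is an explicit construction.

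\textbf{Necessity and the genus bound.} Let $G$ be connected, simple, $d$-regular and bipartite with $m$ vertices per class. Then $|V|=2m$ and $|E|=md$. A cellular orientable embedding in which every face is a $4$-cycle has $|F|=2|E|/4=md/2$ faces. Euler's formula $2-2g=|V|-|E|+|F|=2m-md/2$ gives $g=1+m(d-4)/4$. This is an integer only if $4\mid md$. Simplicity forces $m\ge d$.

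For the lower bound, $G$ is bipartite, so every face of any cellular embedding has length at least $4$, and hence $|F|\le |E|/2$. Euler's formula then yields $\ga(G)\ge 1+m(d-4)/4$. The claimed genus formula therefore follows as soon as a quadrangulation exists. So the whole task reduces to constructing one for every admissible pair $(m,d)$.

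\textbf{Construction framework.} I would use a group-invariant rotation system. Take a group $\Gamma$ of order $m$, either $\Z_m$ or the dihedral group $\Dih_{m/2}$, and a connection set $S=\{s_1,\dots,s_d\}\subseteq\Gamma$ of distinct elements. The vertices are $\Gamma\times\{0,1\}$, and $(g,0)$ is joined to $(gs,1)$ for each $s\in S$. This graph is simple and $d$-regular. It is connected exactly when the quotients $s_i^{-1}s_j$ generate $\Gamma$, which I would arrange by including, for example, $0$ and $1$ in the cyclic case.

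At every white vertex the rotation is a cyclic order $\sigma$ on $S$; at every black vertex it is a cyclic order $\tau$ on $S^{-1}$. With this rule, tracing a face from the edge labelled $s$ gives the label sequence $s,\ \tau(s),\ \sigma\tau(s),\ \dots$, with net displacement $s\,\tau(s)^{-1}\sigma\tau(s)\cdots$. The face closes after $4$ steps exactly when the label returns after two white–black rounds and the displacement is the identity. So the requirements are a cyclic permutation $\sigma$ and a cyclic permutation $\tau$ such that $\pi=\sigma\circ\tau$, suitably interpreted, is an involution on labels with no fixed points, and the alternating sum $a-b+c-d$ vanishes around each orbit. Faces are then simple $4$-cycles, because the four vertices are distinct whenever consecutive labels differ.

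\textbf{Cases.}
\begin{itemize}
\item \emph{Case $4\mid d$, any $m\ge d$, cyclic construction.} Take $S=\{0,1,\dots,d-1\}\subseteq\Z_m$, or a symmetric shift of it. Choose $\sigma$ and $\tau$ so that each face uses labels $i,\ i+1,\ j+1,\ j$, giving displacement $0$ modulo any $m$. Pairing labels in blocks of four should make both permutations single $d$-cycles.
\item \emph{Case $m$ even, $d\equiv2\pmod 4$ (and the other residual cases), dihedral construction.} Use $\Gamma=\Dih_{m/2}$ with $S$ containing rotations and reflections. Reflections are involutions, which gives extra freedom to cancel displacements when $d/2$ is odd. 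The condition $4\mid md$ forces $m$ even whenever $4\nmid d$, so these two cases cover everything.
\end{itemize}

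\textbf{Main obstacle.} The hard step is designing $\sigma$ and $\tau$ so that both are single $d$-cycles while every face closes after exactly $4$ steps. There is a parity obstruction here, since the product of two $d$-cycles is a fixed-point-free involution. I would first find explicit patterns for small $d$ by computer and then extend them by inserting blocks of four labels, verifying by induction on $d$. Small $m$ near $d$ must also be handled, to ensure the elements of $S$ stay distinct in $\Gamma$.
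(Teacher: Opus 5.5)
Your necessity argument and the reduction of the genus formula to the existence of a quadrangulation are correct and match the paper. You should add that a minimum-genus embedding is cellular, so Euler's formula applies to it. The gap is that sufficiency, which is the whole content of the theorem, is never carried out. You list the conditions $\sigma,\tau$ must satisfy and then defer the construction to a computer search and an unspecified induction on $d$.

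In the cyclic case, your face pattern $i,\ i+1,\ j+1,\ j$ is exactly the shape of the paper's faces. But ``blocks of four should make both permutations single $d$-cycles'' is precisely the step that needs an idea. The paper's idea is to make a power of one rotation equal to the label reflection. Write $d=2h$ and $\sigma(s)=2h-1-s$. Take $\rho_B=(0\ 1\ \cdots\ h-1\ 2h-1\ 2h-2\ \cdots\ h)$, so that $\rho_B^h=\sigma$. Take $\rho_A=\rho_B^{h-1}$, which is again a single $2h$-cycle because $\gcd(2h,h-1)=\gcd(2,h-1)=1$ exactly when $h$ is even. Then $\rho_A\rho_B=\sigma$ is a fixed-point-free involution commuting with $\rho_B$. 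The face labels are $s,\rho_B(s),\sigma(s),\sigma\rho_B(s)$, and the displacement $(s+\sigma(s))-(\rho_B(s)+\sigma\rho_B(s))=(2h-1)-(2h-1)$ vanishes in $\Z_m$ for every $m\geq d$ at once.

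The dihedral cases have a more serious problem. Your framework requires the label permutation $\sigma\circ\tau$ to be a fixed-point-free involution. Your own parity remark, made precise, shows this forces $4\mid d$: a product of two $d$-cycles is even, while a fixed-point-free involution on $d$ labels has sign $(-1)^{d/2}$. For odd $d$, no fixed-point-free involution exists at all. So the framework as stated cannot reach any case with $4\nmid d$, and you offer no replacement mechanism.

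What works is the opposite extreme, $\rho_A=\rho_B^{-1}$. The label is then restored after one round, and $F^2(g,s_i)_A=(gq_i,s_i)_A$ with $q_i=s_is_{i+1}^{-1}$. The face closes in exactly four steps iff every $q_i$ is a nonidentity involution. Listing rotations and reflections alternately in $\rho_B$ makes every $q_i$ a reflection, which settles all even $m,d$. For odd $d$ the cyclic list cannot alternate, so exactly one consecutive pair must be two rotations whose quotient is an involution. The paper places $1$ next to $z=r^{n/2}$, which exists precisely because $4\mid m$. Your proposal folds odd $d$ into ``residual cases'' and never says where $4\mid m$ is used. Your remark about reflections cancelling displacements ``when $d/2$ is odd'' does not apply to odd $d$ at all.
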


We construct Haar graphs~\cite{EstelyiPisanski2016} of groups of order $m$, using two cyclic orders on a
connection set of size $d$ to specify all local rotations.
For even $m$ and $d$, this recovers the
graph family quadrangulated by Anderson%
~\cite[pp.~357--358]{Anderson1981}. The dihedral construction
also covers every admissible pair with odd $d$. Our cyclic construction applies whenever $4\mid d$, extending
the degree range for the cyclic graph family in Anderson's
Corollary~2.2 to include $8\mid d$. Together these constructions cover every pair $(m,d)$ satisfying the conditions of Pisanski's conjecture.

Sections~\ref{sec:preliminaries}--\ref{sec:group} give the preliminary definitions and background material, and Sections~\ref{sec:dihedral}--\ref{sec:completion} give the constructions and proof.

\section{Preliminaries}\label{sec:preliminaries}

We follow the definitions of \emph{Graphs on Surfaces} by Mohar and Thomassen~\cite{MoharThomassen2001}. Throughout this paper, graphs are finite, simple, and undirected, and surfaces are connected and closed.

An embedding of a graph on a surface represents its vertices by distinct points and its edges by simple arcs joining their endpoints. These arcs meet only at common endpoints and do not pass through any other vertices. The connected components of the complement of the embedded graph are its \emph{faces}. An embedding is \emph{cellular}, or a \emph{2-cell embedding}, if every face is homeomorphic to an open disk. The \emph{orientable genus} $\ga(G)$ of a graph $G$ is the smallest nonnegative integer $g$ for which $G$ embeds in the orientable surface $S_g$, the connected sum of $g$ tori, where $S_0$ is the sphere. It is known that every minimum-genus embedding of a connected graph in an orientable surface is cellular~\cite{MoharThomassen2001,Youngs1963}.

An \emph{orientable quadrangulation} is a cellular embedding in an
orientable surface in which every facial walk has length four.
If $G$ is connected, simple, bipartite, and $d$-regular, with $m$
vertices in each bipartition class and $d\geq4$, Euler's formula
gives the lower bound on the orientable genus of $G$~\cite[p.~33]{Pisanski1980}.
\begin{equation}\label{eq:euler-bound}
\ga(G)\geq\left\lceil1+\frac{m(d-4)}4\right\rceil.
\end{equation}
If all faces of an orientable embedding of $G$ are quadrilaterals, as in an orientable quadrangulation, the surface on which $G$ embeds has genus $1+m(d-4)/4$, attaining this bound and having $4\mid md$.

Cellular embeddings can be described combinatorially by listing the
local rotations at each vertex. Given a cellular embedding of $G$
in an orientable surface $S$, fix an orientation of $S$. Around
each vertex $v$, we can list its incident edges in clockwise order.
This defines a cyclic permutation $\pi_v$, where $\pi_v(e)$ is the
edge that follows $e$ in that order. Each permutation $\pi_v$ is
called the \emph{local rotation} at $v$, and the set $\Pi=\{\pi_v\mid v\in V(G)\}$ is the \emph{rotation system} of the embedding. More generally,
a choice of one cyclic permutation of the incident edges at each
vertex is called a rotation system of $G$. The Heffter--Edmonds--Ringel rotation principle gives a one-to-one
correspondence between rotation systems and cellular orientable embeddings~\cite[pp.~87--90]{MoharThomassen2001}.

\begin{theorem}[Heffter--Edmonds--Ringel]
\label{thm:rotation-principle}
Every rotation system of a connected graph with at least one edge
determines a cellular embedding in an orientable surface.
This embedding is unique up to a homeomorphism that induces the
identity on the graph.
\end{theorem}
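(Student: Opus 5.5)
The statement just above is the Heffter--Edmonds--Ringel rotation principle. I would prove it by building the surface directly from the face-tracing permutation. Let $G$ be connected with at least one edge and rotation system $\Pi=\{\pi_v\}$. Consider the set $D$ of darts, or directed edges, and the involution $\theta$ that reverses a dart. Let $\rho$ be the permutation sending a dart $(v,e)$ leaving $v$ along $e$ to $(v,\pi_v(e))$. Define the face permutation $\varphi=\rho\circ\theta$: from a dart $(u\to v)$ along $e$, pass to $v$ and leave along $\pi_v(e)$. Each cycle of $\varphi$ is a closed walk in $G$, and these cycles are the intended facial walks.

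\textbf{Existence.} For each cycle of $\varphi$ of length $k$, take a closed $k$-gon and label its sides in order by the darts of the cycle. Every edge $e$ has exactly two darts, and each lies in exactly one $\varphi$-cycle. Glue the two sides labelled by the two darts of $e$, with opposite directions. This gives a closed 2-dimensional complex $S$ in which every edge point has a disk neighbourhood.

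At a vertex $v$, the polygon corners incident to $v$ are glued cyclically in the order given by $\pi_v$. Indeed, the corner between darts $(u\to v)$ and $\varphi(u\to v)$ is followed by the corner starting with the reverse of $\varphi(u\to v)$, and this steps $e\mapsto\pi_v(e)$. Since $\pi_v$ is a single cycle, the corners around $v$ form one disk, so $S$ is a surface. Orient each polygon counterclockwise. Gluing paired sides with opposite directions makes these orientations compatible, so $S$ is orientable. $S$ is connected because $G$ is connected and every polygon meets $G$. The faces are the open polygons, so the embedding of the 1-skeleton $G$ is cellular. Finally, by the vertex-link computation above, the clockwise rotation at $v$ in this orientation is $\pi_v$.

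\textbf{Uniqueness.} Take any cellular orientable embedding with rotation system $\Pi$. Its facial boundary walks, traced with the orientation, are obtained by the same rule "arrive along $e$, leave along $\pi_v(e)$". So they are exactly the cycles of $\varphi$. Each face is an open disk whose boundary walk is such a cycle. Hence the surface is obtained, up to homeomorphism, by gluing disks to $G$ along these walks. Two such gluings can be matched face by face by a homeomorphism that is the identity on $G$: extend the identity on each boundary walk radially (Alexander trick) over each closed disk.

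\textbf{Main obstacle.} The main obstacle is the verification that every vertex has a disk neighbourhood, i.e.\ that the corners around $v$ close up in exactly one cycle matching $\pi_v$. The key point is a careful indexing of the identity $\varphi=\rho\theta$ to show that the link of $v$ is a single circle. A lesser point is that for faces whose boundary walks repeat vertices or edges, the map from the closed disk to the surface is not injective on the boundary. The Alexander-trick extension must therefore be done on the polygon before gluing, and then descend to the quotient.
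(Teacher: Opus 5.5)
Your argument is the standard face-tracing and polygon-pasting proof, and it is essentially the argument in Mohar--Thomassen, which the paper cites instead of proving the theorem itself. One correction to where you place the difficulty: the vertex-link bookkeeping is routine (and you handle it correctly); the delicate step is in the uniqueness half, where you assume that each face of an arbitrary cellular embedding is the image of a closed polygon whose boundary traces its facial walk, and that each vertex has a disk neighbourhood in which its edges appear as radii in the order $\pi_v$ (this is what makes the rule ``arrive along $e$, leave along $\pi_v(e)$'' meaningful). Those facts rest on the Jordan--Sch\"onflies theorem and triangulability, which the cited source establishes separately and which your proof uses without stating.
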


We now describe the facial walks determined by a given rotation system.
A \emph{dart} of $G$ is a pair $(v,e)$, where $e$ is an edge
incident with $v$. If $e$ joins $v$ and $w$, its two darts are
$(v,e)$ and $(w,e)$. Let $R$ denote vertex rotation and $\tau$
denote dart reversal, so
\[
R(v,e)=(v,\pi_v(e)),
\qquad
\tau(v,e)=(w,e).
\]
We compose permutations from right to left. The \emph{facial permutation} is
\begin{equation}\label{eq:face-convention}
F=R\tau.
\end{equation}
Repeatedly applying $F$ returns to the starting dart and traces a closed
walk, called a \emph{facial walk}. The orbits of $F$ are the
facial walks of the embedding, up to cyclic shift. Therefore, a rotation system of a connected graph whose facial walks all have length four determines an orientable quadrangulation.

\section{Haar graphs and Quadrangulations}\label{sec:group}

Representing the vertices of $G$ by elements of a finite group, with one copy for each bipartition class, we choose one cyclic order for the connection set for all $A$-vertices and another for all $B$-vertices.

\begin{definition}[Haar graph]\label{def:haar}
Let $\Gamma$ be a finite group and $S\subseteq\Gamma$. Define $H(\Gamma,S)$ with vertices
\[
\{A_g:g\in\Gamma\}\ \cup\ \{B_g:g\in\Gamma\}
\]
and edges $\{A_gB_{gs}:g\in\Gamma,\ s\in S\}$.
\end{definition}

The two displayed vertex sets are disjoint, even when their group subscripts agree. In particular, the identity may belong to $S$. The graph $H(\Gamma,S)$ is simple, bipartite, and $|S|$-regular, with $|\Gamma|$ vertices in each class. For nonempty $S$, it is connected if and only if $SS^{-1}=\{st^{-1}:s,t\in S\}$ generates $\Gamma$~\cite[Lemma~1(i)]{EstelyiPisanski2016}. In particular, if $1_\Gamma\in S$ and $S$ generates $\Gamma$, then $H(\Gamma,S)$ is connected. Let $|S|=d$, and let $\rho_A$ and $\rho_B$ be $d$-cycles on $S$. Write $(g,s)_A$ for the dart $A_g\to B_{gs}$ and $(g,s)_B$ for the dart $B_g\to A_{gs^{-1}}$. Define the rotations by
\[
R(g,s)_A=(g,\rho_A(s))_A,
\qquad
R(g,s)_B=(g,\rho_B(s))_B.
\]
At every vertex, the label determines the neighboring vertex uniquely, so these are cyclic orders of all incident darts. Dart reversal is given by
\[
\tau(g,s)_A=(gs,s)_B,
\qquad
\tau(g,s)_B=(gs^{-1},s)_A.
\]
Thus, the face permutation satisfies
\begin{align}
F(g,s)_A&=(gs,\rho_B(s))_B,\label{eq:first-step}\\
F(g,s)_B&=(gs^{-1},\rho_A(s))_A,\notag
\intertext{and hence}
F^2(g,s)_A&=\bigl(gs\rho_B(s)^{-1},\rho_A\rho_B(s)\bigr)_A.
\label{eq:two-step}
\end{align}

In~\eqref{eq:two-step}, the label changes by $\rho_A\rho_B$ and the group subscript is multiplied on the right by $s\rho_B(s)^{-1}$. We choose $\rho_A$ and $\rho_B$ so that applying this rule twice returns to the starting dart, but applying it once does not. Every face contains an $A$-dart, so this gives all facial orbits. In the dihedral constructions, we take $\rho_A=\rho_B^{-1}$ and choose the cyclic order $\rho_B$ so that $s\rho_B(s)^{-1}$ has order two for every $s\in S$. By~\eqref{eq:two-step}, two facial steps leave the label fixed and change the group subscript. Two further steps then return to the original dart.

\begin{figure}[ht]
\centering
\scalebox{0.8}{%
\begin{tikzpicture}[
    font=\small,
    walk/.style args={#1}{
        draw=#1, line width=.85pt,
        -{Stealth[length=5.4pt,width=4pt]},
        shorten <=4.6pt, shorten >=4.6pt,
        preaction={draw=#1!13,line width=5pt,-}
    },
    group step/.style args={#1}{
        circle,fill=#1!10,draw=#1!25,line width=.4pt,
        minimum size=18pt,inner sep=2pt,
        text=#1,font=\small\boldmath
    },
    vertex/.style={circle,draw=facialdart,line width=.8pt,
        minimum size=6pt,inner sep=0pt},
    vertex text/.style={inner sep=0pt},
    dart/.style={inner sep=0pt}
]
\definecolor{facialdart}{HTML}{2F5364}
\colorlet{stepone}{facialdart}
\colorlet{steptwo}{teal!65!black}
\colorlet{stepthree}{orange!65!black}
\colorlet{stepfour}{violet!65!black}

\coordinate (v0) at (135:2.1);
\coordinate (v1) at (45:2.1);
\coordinate (v2) at (-45:2.1);
\coordinate (v3) at (-135:2.1);

\draw[walk=stepone] (v0)
    arc[start angle=135,end angle=45,radius=2.1];
\draw[walk=steptwo] (v1)
    arc[start angle=45,end angle=-45,radius=2.1];
\draw[walk=stepthree] (v2)
    arc[start angle=-45,end angle=-135,radius=2.1];
\draw[walk=stepfour] (v3)
    arc[start angle=225,end angle=135,radius=2.1];

\node[group step=stepone] at (0,1.55) {$s_i$};
\node[group step=steptwo] at (1.55,0) {$s_{i+1}^{-1}$};
\node[group step=stepthree] at (0,-1.55) {$s_i$};
\node[group step=stepfour] at (-1.55,0) {$s_{i+1}^{-1}$};

\node[dart] at (0,2.65) {$(g,s_i)_A$};
\node[dart,anchor=west] at (2.65,0)
    {$(gs_i,s_{i+1})_B$};
\node[dart] at (0,-2.65) {$(gq_i,s_i)_A$};
\node[dart,anchor=east] at (-2.65,0)
    {$(gs_{i+1},s_{i+1})_B$};

\node[vertex,fill=facialdart] at (v0) {};
\node[vertex,fill=white] at (v1) {};
\node[vertex,fill=facialdart] at (v2) {};
\node[vertex,fill=white] at (v3) {};

\node[vertex text,anchor=south east]
    at ([xshift=-6pt,yshift=6pt]v0) {$A_g$};
\node[vertex text,anchor=south west]
    at ([xshift=6pt,yshift=6pt]v1) {$B_{gs_i}$};
\node[vertex text,anchor=north west]
    at ([xshift=6pt,yshift=-6pt]v2) {$A_{gq_i}$};
\node[vertex text,anchor=north east]
    at ([xshift=-6pt,yshift=-6pt]v3) {$B_{gs_{i+1}}$};
\end{tikzpicture}%
}
\caption{{A face of Lemma~\ref{lem:involutions}.
Each arrow is labeled by its dart and its group multiplier~\cite{MetzgerUlrigg2026}.}}
\label{fig:face}
\end{figure}

\begin{lemma}[Quadrangulations from involutions]\label{lem:involutions}
Let $s_0,\ldots,s_{d-1}$ be distinct elements of a finite group $\Gamma$, and put $S=\{s_0,\ldots,s_{d-1}\}$. Suppose that
\[
q_i=s_is_{i+1}^{-1}
\]
is a nonidentity involution for every $i$ modulo $d$. For connected $H(\Gamma,S)$, the rotations
\[
\rho_B=(s_0\ s_1\ \cdots\ s_{d-1}),\qquad \rho_A=\rho_B^{-1}
\]
give an orientable quadrangulation of $H(\Gamma,S)$. Every face boundary is a simple $4$-cycle.
\end{lemma}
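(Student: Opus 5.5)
We compute the facial orbits directly from \eqref{eq:first-step} and \eqref{eq:two-step}, and then show that every orbit has length exactly four and traces a simple $4$-cycle. The rotations $\rho_A$ and $\rho_B$ are $d$-cycles on $S$, so they define a rotation system of $H(\Gamma,S)$. Since the graph is connected, Theorem~\ref{thm:rotation-principle} gives a cellular orientable embedding whose faces are the orbits of $F=R\tau$. So it suffices to analyze the orbits of $F$.

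Start from an $A$-dart $(g,s_i)_A$. Since $\rho_A\rho_B=\mathrm{id}$, \eqref{eq:two-step} gives
\[
F^2(g,s_i)_A=(gs_is_{i+1}^{-1},s_i)_A=(gq_i,s_i)_A .
\]
The label is unchanged, so applying the rule again yields $F^4(g,s_i)_A=(gq_i^2,s_i)_A=(g,s_i)_A$, because $q_i$ is an involution. Since $q_i\neq 1$, we have $F^2(g,s_i)_A\neq(g,s_i)_A$. The darts $F(g,s_i)_A$ and $F^3(g,s_i)_A$ are $B$-darts, so neither equals the starting $A$-dart. Hence the orbit has length exactly $4$. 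Every orbit of $F$ alternates between $A$- and $B$-darts, so every orbit contains an $A$-dart, and thus every face has length four. Writing out the orbit explicitly, as in Figure~\ref{fig:face}, gives
\[
(g,s_i)_A,\ (gs_i,s_{i+1})_B,\ (gq_i,s_i)_A,\ (gq_is_i,s_{i+1})_B .
\]
Here $gq_is_i=gs_is_{i+1}^{-1}s_i$. To match the figure's vertex $B_{gs_{i+1}}$, we use $q_i=q_i^{-1}=s_{i+1}s_i^{-1}$, which gives $gq_is_i=gs_{i+1}$.

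It remains to check that the boundary walk $A_g,B_{gs_i},A_{gq_i},B_{gs_{i+1}}$ is a simple $4$-cycle, which is the only step needing care. The two $A$-vertices are distinct because $q_i\neq1$. The two $B$-vertices are distinct because $s_i\neq s_{i+1}$, and these are distinct elements of $S$ since $d\geq 2$. Vertices in different classes are distinct by definition. The four edges are therefore distinct as well, since in a simple graph distinct vertex pairs give distinct edges. So the face boundary is a simple $4$-cycle, and the embedding is an orientable quadrangulation.
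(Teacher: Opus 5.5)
Your proposal is correct and follows essentially the same route as the paper: you compute $F^2(g,s_i)_A=(gq_i,s_i)_A$ from \eqref{eq:two-step} using $\rho_A\rho_B=\mathrm{id}$, get orbit length exactly four from $q_i\neq1$, $q_i^2=1$ and the alternation of dart types, and read off the face $A_g,B_{gs_i},A_{gq_i},B_{gs_{i+1}}$ with the same distinctness checks. One minor point: you do not need to assume $d\geq2$, since the hypothesis already forces it ($d=1$ would give $q_0=s_0s_0^{-1}=1$).
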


\begin{proof}
Both local label permutations are $d$-cycles. By~\eqref{eq:two-step},
\[
F^2(g,s_i)_A=(gq_i,s_i)_A,
\qquad
F^4(g,s_i)_A=(gq_i^2,s_i)_A=(g,s_i)_A.
\]
The first equality does not return the original dart because $q_i\neq1_\Gamma$. Since $F$ interchanges the two types of darts, every facial orbit has length exactly four. Since $q_i^2=1_\Gamma$ and $q_i=s_is_{i+1}^{-1}$, we have $q_is_i=s_{i+1}$. The face starting with $(g,s_i)_A$ has vertices
\begin{equation}\label{eq:involution-face}
A_g,\quad B_{gs_i},\quad A_{gq_i},\quad B_{gs_{i+1}}.
\end{equation}
The two $A$-vertices are distinct because $q_i\neq1_\Gamma$. The two $B$-vertices are distinct because $s_i\neq s_{i+1}$. Vertices from different classes are distinct by definition.
\end{proof}

\section{The dihedral construction}\label{sec:dihedral}

Suppose $m=2n$ is even. We use the dihedral group of order $2n$, denoted by
\[
\Dih_n=\langle r,t\mid r^n=t^2=1,\ trt=r^{-1}\rangle.
\]
Its elements are $1,r,\ldots,r^{n-1}$ and $t,rt,\ldots,r^{n-1}t$. We call these the rotations and reflections, respectively. The relation $tr^a=r^{-a}t$ shows that every reflection is an involution. Further, when one of $x$ and $y$ is a rotation and the other is a reflection, the element $xy^{-1}$ is a reflection:
\begin{equation}\label{eq:dihedral-quotients}
r^a(r^bt)^{-1}=r^{a+b}t,
\qquad
(r^at)(r^b)^{-1}=r^{a+b}t.
\end{equation}
Thus, a cyclic list of elements of $\Dih_n$ that alternates between rotations and reflections satisfies the order-two condition in Lemma~\ref{lem:involutions}.

\begin{lemma}[Alternating rotations and reflections]
\label{lem:dihedral-alternation}
Let $d\geq2$ be even, and let
$s_0,\ldots,s_{d-1}\in\Dih_n$ alternate between rotations
and reflections. For every $i$ modulo $d$, the element
\[
q_i=s_is_{i+1}^{-1}
\]
satisfies $q_i\neq1$ and $q_i^2=1$.
\end{lemma}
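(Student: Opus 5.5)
Because $d$ is even, the cyclic list $s_0,\ldots,s_{d-1}$ closes up consistently. Alternation therefore holds for every index $i$ modulo $d$, including the wrap-around pair $(s_{d-1},s_0)$. So for each $i$, exactly one of $s_i$ and $s_{i+1}$ is a rotation and the other is a reflection. The plan is to show that this forces $q_i=s_is_{i+1}^{-1}$ to be a reflection. Both conclusions of the lemma then follow from two facts about reflections in $\Dih_n$: every reflection is an involution, and no reflection is the identity.

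First I split into two cases according to which of $s_i$ and $s_{i+1}$ is the rotation. If $s_i=r^a$ and $s_{i+1}=r^bt$, the first identity in~\eqref{eq:dihedral-quotients} gives $q_i=r^{a+b}t$. If $s_i=r^at$ and $s_{i+1}=r^b$, the second identity gives $q_i=r^{a+b}t$ again. (For the first identity, $(r^bt)^{-1}=r^bt$, since $r^bt$ is a reflection and hence an involution.) In either case $q_i$ is a reflection.

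Next, $q_i^2=1$ follows from the relation $tr^a=r^{-a}t$ already recorded in the paper: $(r^ct)^2=r^c\,tr^c\,t=r^cr^{-c}t^2=1$. For $q_i\neq1$, I use the listed normal form of the elements of $\Dih_n$, in which the $2n$ elements $1,r,\ldots,r^{n-1},t,\ldots,r^{n-1}t$ are distinct. Then no reflection $r^ct$ equals $1=r^0$. Equivalently, the homomorphism $\Dih_n\to\{\pm1\}$ sending $r\mapsto1$ and $t\mapsto-1$ is well defined on the presentation and sends every reflection to $-1$.

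There is no real obstacle here; the argument is a direct computation. The only points needing care are that the evenness of $d$ is what makes the wrap-around pair alternate, and that rotations and reflections form disjoint sets in $\Dih_n$.
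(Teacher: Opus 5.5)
Your proof is correct and follows essentially the same route as the paper. The paper also uses the evenness of $d$ to make every cyclic pair mixed, including $(s_{d-1},s_0)$, then applies the identities~\eqref{eq:dihedral-quotients} to see that $q_i$ is a reflection $r^ct$, and computes $(r^ct)^2=1$ from $tr^c=r^{-c}t$; your justification that a reflection is not the identity, via the normal form or the sign homomorphism, just fills in a fact the paper asserts without proof.
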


\begin{proof}
Since $d$ is even, every consecutive pair consists of
one rotation and one reflection, including the last
and first entries. By~\eqref{eq:dihedral-quotients},
$q_i$ is therefore a reflection, say $q_i=r^at$.
A reflection is not the identity, and
\[
q_i^2=(r^at)^2
=r^a(tr^a)t
=r^ar^{-a}t^2
=1.
\]
\end{proof}

\subsection{Dihedral rotations for even \texorpdfstring{$m$ and $d$}{m and d}}

Anderson gave an orientable quadrangulation for this graph family%
~\cite[pp.~357--358]{Anderson1981} using White's composition
theorem~\cite{White1972}. We give explicit dihedral rotations
for the same graphs.

\begin{proposition}\label{prop:even-degree}
Let $m=2n$, $d=2k$ with $2\leq k\leq n$, and $S=\{r^a,r^at:0\leq a<k\}$. Then
\begin{equation}\label{eq:even-list}
\rho_B=(1\ t\ r\ rt\ r^2\ r^2t\ \cdots\ r^{k-1}\ r^{k-1}t),
\qquad \rho_A=\rho_B^{-1}
\end{equation}
give an orientable quadrangulation of the connected simple
$d$-regular bipartite graph $H(\Dih_n,S)$, with $m$ vertices
in each class and simple $4$-cycles as face boundaries.
For $k=2$, the cyclic list in~\eqref{eq:even-list} stops at $rt$.
\end{proposition}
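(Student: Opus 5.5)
The plan is to reduce everything to Lemma~\ref{lem:involutions} combined with Lemma~\ref{lem:dihedral-alternation}. We list the connection set in the cyclic order
\[
s_{2a}=r^a,\qquad s_{2a+1}=r^at\qquad(0\leq a<k),
\]
so that $\rho_B=(s_0\ s_1\ \cdots\ s_{d-1})$ is exactly the cycle in~\eqref{eq:even-list}. With this indexing, three hypotheses of Lemma~\ref{lem:involutions} need checking: the $s_i$ are distinct, every $q_i=s_is_{i+1}^{-1}$ is a nonidentity involution, and $H(\Dih_n,S)$ is connected. Simplicity, $d$-regularity and $m$ vertices per class then come for free from the remarks after Definition~\ref{def:haar}, since $|S|=2k=d$ and $|\Dih_n|=2n=m$.

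For distinctness, we use $k\leq n$. The exponents $0,\ldots,k-1$ are distinct modulo $n$, so the rotations $r^a$ are pairwise distinct. The reflections $r^at$ are likewise pairwise distinct, and no rotation equals a reflection. Hence $|S|=d$. For the involution condition, $d=2k$ is even and the list alternates rotation, reflection, rotation, and so on, ending with the reflection $r^{k-1}t$ followed cyclically by $s_0=1$. Lemma~\ref{lem:dihedral-alternation} therefore applies directly and gives $q_i\neq1$ and $q_i^2=1$ for all $i$ modulo $d$. (Explicitly, $q_{2a}=r^a(r^at)^{-1}=r^{2a}t$ and $q_{2a+1}=r^at\,r^{-a-1}=r^{2a+1}t$, though the lemma makes this unnecessary.)

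For connectivity, $1\in S$, so by the criterion quoted after Definition~\ref{def:haar} it suffices that $S$ generates $\Dih_n$. Since $k\geq2$, $S$ contains both $r$ and $t$, which generate $\Dih_n$. The case $k=2$ just means the list is $(1\ t\ r\ rt)$, and the same checks apply verbatim. Lemma~\ref{lem:involutions} then yields the orientable quadrangulation whose face boundaries are simple $4$-cycles. I expect no genuine obstacle here. The only points needing care are the cyclic wraparound pair $(r^{k-1}t,1)$ in the alternation, which is handled by the parity of $d$, and the use of $k\leq n$ to guarantee distinctness, which is where the hypothesis $m\geq d$ enters.
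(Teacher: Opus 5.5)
Your proposal is correct and matches the paper's proof: the listed labels are distinct, they alternate cyclically between rotations and reflections so Lemma~\ref{lem:dihedral-alternation} applies, the graph is connected since $1,r,t\in S$, and Lemma~\ref{lem:involutions} finishes the argument. One small slip in your optional parenthetical is that the formula $q_{2a+1}=r^{2a+1}t$ fails at the wraparound index $a=k-1$ unless $k=n$: there $s_{2k}=s_0=1$, so $q_{2k-1}=r^{k-1}t$, which is still a reflection, so nothing downstream is affected.
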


\begin{proof}
The entries of $\rho_B$ are distinct and alternate cyclically between rotations and reflections. By Lemma~\ref{lem:dihedral-alternation}, every product of a label and the inverse of its successor has order two. Since $1,r,t\in S$, the graph is connected. Lemma~\ref{lem:involutions} therefore gives the required quadrangulation.
\end{proof}

When $d=m$, this is $K_{m,m}$ with genus $(m-2)^2/4$, recovering Ringel's formula for even $m$~\cite{Ringel1965}.
Figure~\ref{fig:example-even} shows the construction for
$m=6$ and $d=4$.

\begin{figure}[htbp]
\centering
\begin{tikzpicture}
\foreach \i in {0,...,5}{
  \coordinate (o\i) at ({90-60*\i}:2.05);
  \coordinate (i\i) at ({90-60*\i}:1.30);
}
\foreach \i in {0,...,5}{
  \pgfmathtruncatemacro{\j}{mod(\i+1,6)}
  \foreach \p in {o,i}{
    \foreach \q in {o,i}{
      \draw[ex edge] (\p\i)--(\q\j);
    }
  }
}
\foreach \a/\b in {o0/o5,i0/i1,o2/o1,i2/i3,o4/o3,i4/i5}{
  \draw[ex matching] (\a)--(\b);
}
\foreach \i/\a/\b/\c in {
  0/1/rt/A,1/r/rt/B,2/r/{r^2t}/A,
  3/{r^2}/{r^2t}/B,4/{r^2}/t/A,5/1/t/B}{
  \node[ex \c] at (o\i) {$\a$};
  \node[ex \c] at (i\i) {$\b$};
}
\node[ex text] at (2.85,1.15)
  {$m=6,\quad d=4,\quad \gamma(G)=1$};
\node[ex text] at (2.85,.35)
  {$\rho_B=(1\ t\ r\ rt)$};
\node[ex text] at (2.85,-.20)
  {$\rho_A=\rho_B^{-1}$};

\fill[black!85] (2.92,-1.05) circle (2.5pt);
\node[ex text] at (3.12,-1.05) {$A_g$};
\draw[draw=black!85,fill=white,line width=.6pt]
  (4.10,-1.05) circle (2.5pt);
\node[ex text] at (4.30,-1.05) {$B_g$};
\draw[ex matching] (2.85,-1.60)--(3.25,-1.60);
\node[ex text] at (3.43,-1.60) {$A_gB_g$};
\end{tikzpicture}
\caption{The even-degree dihedral group example
$H(\mathrm{D}_3,\{1,t,r,rt\})$.}
\label{fig:example-even}
\end{figure}

\subsection{Dihedral rotations for odd \texorpdfstring{$d$}{d}
and \texorpdfstring{$4\mid m$}{m divisible by four}}

Some of the odd-degree cases were established by Alpert and
Gross~\cite{AlpertGross1976}, and the case $d=m-1$ follows
from the genus of nearly complete bipartite
graphs~\cite{MoharParsonsPisanski1985}. The following construction
applies to every odd $d\geq5$ and every $m\geq d$ divisible
by four.

Suppose $d=2k+1$. An odd-length cyclic order cannot
alternate between rotations and reflections throughout.
We use $k+1$ rotations and $k$ reflections, arranged so
that exactly one consecutive pair consists of two rotations.
Since $d$ is odd, the condition $4\mid md$ requires
$4\mid m$, so $n=m/2$ is even. The rotation $z=r^{n/2}$
has order two, since $z\neq1$ and $z^2=r^n=1$.
We take $1$ and $z$ as the consecutive rotations,
giving $1z^{-1}=z$. Every other consecutive pair will
contain one rotation and one reflection.

\begin{proposition}\label{prop:odd-degree}
Let $m=2n$ with $4\mid m$ and
$5\leq d=2k+1\leq m$. Choose distinct rotations
$u_0,\ldots,u_k$ such that
\[
u_0=1,\qquad u_1=z=r^{n/2},\qquad u_2=r.
\]
Put $v_j=r^jt$ for $0\leq j<k$, and set
\[
S=\{u_0,\ldots,u_k\}\cup\{v_0,\ldots,v_{k-1}\}.
\]
Define
\begin{equation}\label{eq:odd-list}
\rho_B=(u_0\ u_1\ v_0\ u_2\ v_1\ \cdots\ u_k\ v_{k-1}),
\qquad
\rho_A=\rho_B^{-1}.
\end{equation}
These rotations give an orientable quadrangulation of
the connected simple $d$-regular bipartite graph
$H(\Dih_n,S)$, with $m$ vertices in each class and
simple $4$-cycles as face boundaries.
\end{proposition}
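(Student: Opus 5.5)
The plan is to reduce everything to Lemma~\ref{lem:involutions}. That lemma needs four things: the $d$ listed elements are distinct, each consecutive quotient $q_i=s_is_{i+1}^{-1}$ (indices mod $d$) is a nonidentity involution, the graph $H(\Dih_n,S)$ is connected, and the parameters make sense.

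First I check that the construction is well defined. Since $4\mid m$, $n=m/2$ is even. Because $d\geq5$ we have $k\geq2$, so $u_2=r$ is actually required. Since $d\leq m$ gives $n\geq d/2>k$, there are enough rotations to choose $k+1$ distinct ones with the prescribed first three. These three are distinct: $r^{n/2}\neq 1$ and $r^{n/2}\neq r$ because $n\geq 4$. Indeed $n$ is even, and $n\geq k+1\geq 3$ forces $n\geq4$. For the reflections, $k<n$ makes $v_0,\ldots,v_{k-1}$ distinct. Rotations and reflections never coincide, so $|S|=d$. This distinctness check is the only step where some care is needed, in particular excluding $n=2$, which the hypothesis $d\leq m$ rules out.

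Next I compute the consecutive quotients in the cyclic order~\eqref{eq:odd-list}. The pair $(u_0,u_1)$ gives $q=1\cdot z^{-1}=z$. This is a nonidentity involution since $z^2=r^n=1$ and $z\neq1$. Every other consecutive pair is $(u_1,v_0)$, $(v_{j-1},u_{j+1})$, $(u_{j+1},v_j)$, or the wraparound pair $(v_{k-1},u_0)$. Each of these consists of one rotation and one reflection, so by~\eqref{eq:dihedral-quotients} its quotient is a reflection. As in the proof of Lemma~\ref{lem:dihedral-alternation}, a reflection is a nonidentity involution. So every $q_i$ is a nonidentity involution.

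For connectivity, $1,r,t=v_0\in S$, and these generate $\Dih_n$. Hence $H(\Dih_n,S)$ is connected by the criterion stated after Definition~\ref{def:haar}. It is simple, bipartite and $d$-regular with $m$ vertices per class. Lemma~\ref{lem:involutions} then yields the orientable quadrangulation with simple $4$-cycle faces.
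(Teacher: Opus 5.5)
Your proof is correct and follows essentially the same route as the paper. You verify $|S|=d$ using $n\geq4$ and $k+1\leq n$. You observe that the single pair of consecutive rotations gives the involution $z$, and that every other cyclically consecutive pair mixes a rotation and a reflection, so its quotient is a reflection by \eqref{eq:dihedral-quotients}. You then get connectivity from $1,r,t\in S$ and invoke Lemma~\ref{lem:involutions}. The only difference is cosmetic: the paper routes the mixed pairs through Lemma~\ref{lem:dihedral-alternation} applied to two-element lists, while you cite \eqref{eq:dihedral-quotients} directly.
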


\begin{proof}
Since $d$ is odd and $d\leq2n$, we have
$2\leq k\leq n-1$. Also $n\geq4$, because
$4\mid m$ and $m\geq d\geq5$. Thus $1,z,r$ are
distinct, and $k+1\leq n$ leaves enough rotations
to choose the remaining $u_i$. The $k$ reflections
are distinct as well, so $S$ has exactly $d$ elements. Write $\rho_B=(s_0\ s_1\ \cdots\ s_{d-1})$.
The first consecutive product is
\[
s_0s_1^{-1}=u_0u_1^{-1}=z,
\]
which has order two. Every other consecutive pair
contains one rotation and one reflection, including
the last and first entries. Applying
Lemma~\ref{lem:dihedral-alternation} to each of these
two-element lists shows that $s_is_{i+1}^{-1}$ has
order two for every remaining index $i$.
Thus, for every $i$ modulo $d$,
\[
s_is_{i+1}^{-1}\neq1,
\qquad
\bigl(s_is_{i+1}^{-1}\bigr)^2=1.
\]
Since $1,r,t\in S$, the graph is connected.
Lemma~\ref{lem:involutions} therefore gives the
required quadrangulation.
\end{proof}

Figure~\ref{fig:example-odd} shows the construction for
$m=8$ and $d=5$, with $z=r^2$.

\begin{figure}[htbp]
\centering
\begin{tikzpicture}
\foreach \j in {0,...,7}{
  \coordinate (a\j) at ({90-45*\j}:2.30);
  \coordinate (b\j) at ({67.5-45*\j}:2.30);
}
\foreach \i in {0,...,3}{
  \foreach \e in {0,1}{
    \pgfmathtruncatemacro{\a}{2*\i+\e}
    \foreach \p/\q in {0/0,2/0,0/1,1/0,1/1}{
      \pgfmathtruncatemacro{\j}{mod(\i+(1-2*\e)*\p+4,4)}
      \pgfmathtruncatemacro{\f}{mod(\e+\q,2)}
      \pgfmathtruncatemacro{\b}{2*\j+\f}
      \draw[ex edge] (a\a)--(b\b);
    }
  }
}
\foreach \j in {0,...,7}{
  \draw[ex matching] (a\j)--(b\j);
}
\foreach \j/\g in {
  0/1,1/t,2/r,3/rt,4/{r^2},5/{r^2t},6/{r^3},7/{r^3t}}{
  \node[ex A] at (a\j) {$\g$};
  \node[ex B] at (b\j) {$\g$};
}
\node[ex text] at (2.85,1.15)
  {$m=8,\quad d=5,\quad \gamma(G)=3$};
\node[ex text] at (2.85,.35)
  {$\rho_B=(1\ r^2\ t\ r\ rt)$};
\node[ex text] at (2.85,-.20)
  {$\rho_A=\rho_B^{-1}$};

\fill[black!85] (2.92,-1.05) circle (2.5pt);
\node[ex text] at (3.12,-1.05) {$A_g$};
\draw[draw=black!85,fill=white,line width=.6pt]
  (4.10,-1.05) circle (2.5pt);
\node[ex text] at (4.30,-1.05) {$B_g$};
\draw[ex matching] (2.85,-1.60)--(3.25,-1.60);
\node[ex text] at (3.43,-1.60) {$A_gB_g$};
\end{tikzpicture}
\caption{The odd-degree dihedral group example
$H(\mathrm{D}_4,\{1,r^2,t,r,rt\})$.}
\label{fig:example-odd}
\end{figure}

\section{Cyclic rotations for degrees divisible by four}
\label{sec:cyclic}

The dihedral construction covers every admissible pair $(m,d)$ with
even $m$. For odd $m$, the condition $4\mid md$ requires
$4\mid d$. We give cyclic rotations that work for every
$m\geq d\geq4$ with $4\mid d$. For $m>d$, Anderson quadrangulated the same cyclic graph
family when $d\equiv4\pmod8$ and $d\geq12$%
~\cite[Corollary~2.2]{Anderson1981}. His graph is isomorphic to
$H(\Z_m,\{0,\ldots,d-1\})$. The rotations below also apply when $8\mid d$, completing the odd $m$ case. For $d=4$,
the graph is the circulant $C_{2m}(1,3)$, and our rotations recover its known toroidal quadrangulation~\cite[Proposition~5]{ConderGrande2015}.

Write $d=2h$, where $h$ is even, and using additive notation
in $\Z_m$, set
\begin{equation}\label{eq:cyclic-labels}
S=\{0,1,\ldots,2h-1\},
\qquad
\rho_B=(0\ 1\ \cdots\ h-1\ 2h-1\ 2h-2\ \cdots\ h),
\qquad
\rho_A=\rho_B^{h-1}.
\end{equation}
The labels in $S$ are distinct modulo $m$. Since $\rho_B$ is a $2h$-cycle and $h$ is even, $\gcd(2h,h-1)=\gcd(2,h-1)=1$. For a $k$-cycle $\pi$, it is standard that $\pi^N$ has exactly $\gcd(k,N)$ cycles each of length $k/\gcd(k,N)$. Thus our previous computation shows $\rho_A$ is also a
$2h$-cycle. The first half of the $\rho_B$-cycle increases from $0$ to $h-1$, and the second decreases from $2h-1$ to $h$. Thus labels $h$ positions apart sum to $2h-1$. Put $\sigma(s)=2h-1-s$. Then
\[
\rho_B^h=\sigma,\qquad \rho_A\rho_B=\sigma,\qquad \rho_B \sigma=\sigma\rho_B.
\]
The permutation $\sigma$ has no fixed points, since $2s$ is an even integer it cannot equal $2h-1$.

\begin{proposition}\label{prop:cyclic}
Let $m\geq d\geq4$ with $4\mid d$. The rotations
in~\eqref{eq:cyclic-labels} give an orientable quadrangulation
of $H(\Z_m,S)$, with $m$ vertices in each class, degree $d$,
and simple $4$-cycles as face boundaries.
\end{proposition}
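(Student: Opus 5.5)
The plan is to compute the facial permutation directly from \eqref{eq:two-step} and use the identities $\rho_A\rho_B=\sigma$ and $\rho_B\sigma=\sigma\rho_B$ stated before the proposition. Lemma~\ref{lem:involutions} does not apply here, since $s-\rho_B(s)$ is not an involution in $\Z_m$ and $\rho_A\neq\rho_B^{-1}$. Instead, I would show directly that $F^4$ fixes every dart while $F^2$ fixes none.

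\textbf{Preliminaries.} Since $0,1\in S$, the set $S-S$ contains $1$, which generates $\Z_m$. Hence $H(\Z_m,S)$ is connected. It is $d$-regular with $m$ vertices per class, because the $d\le m$ labels are distinct modulo $m$. Both $\rho_A$ and $\rho_B$ are $2h$-cycles, as already noted, so they are valid local rotations.

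\textbf{Face length.} In additive notation, \eqref{eq:two-step} becomes
\[
F^2(g,s)_A=\bigl(g+s-\rho_B(s),\,\sigma(s)\bigr)_A .
\]
Applying this twice gives a label $\sigma^2(s)=s$ and a group subscript
\[
g+\bigl(s-\rho_B(s)\bigr)+\bigl(\sigma(s)-\rho_B(\sigma(s))\bigr).
\]
Because $\sigma$ commutes with $\rho_B$, we have $\rho_B(\sigma(s))=\sigma(\rho_B(s))=2h-1-\rho_B(s)$. So the second bracket equals $(2h-1-s)-(2h-1-\rho_B(s))=-(s-\rho_B(s))$. The two brackets therefore cancel, and $F^4$ is the identity on $A$-darts. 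On the other hand, $F^2$ moves every dart, because $\sigma$ has no fixed points. Since $F$ alternates between $A$-darts and $B$-darts and every orbit contains an $A$-dart, every facial orbit has length exactly $4$.

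\textbf{Simple faces.} Starting from $(g,s)_A$, the face has vertices
\[
A_g,\quad B_{g+s},\quad A_{g+s-\rho_B(s)},\quad B_{g+s-\rho_B(s)+\sigma(s)} .
\]
For the two $A$-vertices to be distinct, we need $s-\rho_B(s)\not\equiv0\pmod m$. Reading off the cycle, the difference $s-\rho_B(s)$ is $\pm1$, except at the two junctions $h-1\mapsto 2h-1$ and $h\mapsto 0$, where it is $\mp h$. All of these are nonzero modulo $m$, because $h<m$.

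For the two $B$-vertices to be distinct, we need $\sigma(s)\not\equiv\rho_B(s)\pmod m$. Both are elements of $S$, and elements of $S$ are distinct modulo $m$, so this reduces to $\rho_B^h(s)\neq\rho_B(s)$. That inequality holds because $\rho_B$ is a $2h$-cycle and $h\not\equiv1\pmod{2h}$. Vertices from different classes are distinct by definition.

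\textbf{Conclusion and main obstacle.} Combining these steps with Theorem~\ref{thm:rotation-principle} gives the required orientable quadrangulation. The main obstacle is the cancellation that shows $F^4$ is the identity. It depends on the commutation $\rho_B\sigma=\sigma\rho_B$ and on $\sigma$ being the affine reflection $s\mapsto 2h-1-s$. I would verify both carefully from the explicit form of $\rho_B$: since $\rho_B^h=\sigma$, the two commute automatically, and the identity $\rho_A\rho_B=\rho_B^{h}$ follows from $\rho_A=\rho_B^{h-1}$.
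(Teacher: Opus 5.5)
Your proof is correct and follows the paper's argument essentially step for step: the same $F^2$ formula, the cancellation in $F^4$ via $\sigma\rho_B=\rho_B\sigma$, the fixed-point-freeness of $\sigma$, and the reduction of $B$-vertex distinctness to $\rho_B^{h}(s)\neq\rho_B(s)$. The only difference is cosmetic: you check the two $A$-vertices are distinct by computing $s-\rho_B(s)\in\{\pm1,\pm h\}$ explicitly, whereas the paper simply notes that $s$ and $\rho_B(s)$ are distinct elements of $S$, hence distinct modulo $m$.
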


\begin{proof}
The graph is simple, bipartite, and $d$-regular. It is
connected because $S$ contains the identity $0$ and the
generator $1$ of $\Z_m$. Both local rotations are $d$-cycles. In additive notation,~\eqref{eq:two-step} gives
\begin{equation}\label{eq:cyclic-two-step}
F^2(x,s)_A=\bigl(x+s-\rho_B(s),\sigma(s)\bigr)_A.
\end{equation}
Applying this twice yields
\[
F^4(x,s)_A=
\bigl(x+s-\rho_B(s)+\sigma(s)-\rho_B(\sigma(s)),s\bigr)_A.
\]
Since $\sigma$ commutes with $\rho_B$,
\[F^4(x,s)_A=(x+\underbrace{s+\sigma(s)}_{2h-1}-\underbrace{(\rho_B(s)+\sigma(\rho_B(s)))}_{2h-1},s)_A.\]
Thus $F^4(x,s)_A=(x,s)_A$. Since $F$ interchanges $A$-darts and $B$-darts and $\sigma$ has no fixed points, every
facial orbit has length exactly four. The face starting with $(x,s)_A$ has vertices
\begin{equation}\label{eq:cyclic-face}
A_x,\quad B_{x+s},\quad A_{x+s-\rho_B(s)},\quad
B_{x+(2h-1)-\rho_B(s)}.
\end{equation}
The two $A$-vertices are distinct because $s$ and
$\rho_B(s)$ are distinct modulo $m$. If the two $B$-vertices
were equal, then $\rho_B(s)\equiv \sigma(s)\pmod m$. Both labels
belong to $S$, so this would imply
$\rho_B(s)=\sigma(s)=\rho_B^h(s)$. Hence $\rho_B^{h-1}(s)=s$,
which is impossible because $\rho_B$ is a $2h$-cycle and
$0<h-1<2h$.
\end{proof}

For $m=7$ and $d=4$, the graph is shown in
Figure~\ref{fig:example-cyclic}.

\begin{figure}[H]
\centering
\begin{tikzpicture}
\foreach \i in {0,...,13}{
  \coordinate (v\i) at ({90-360*\i/14}:2.05);
}
\foreach \i in {0,...,13}{
  \pgfmathsetmacro{\a}{90-360*\i/14}
  \draw[ex edge] (v\i)
    arc[start angle=\a,delta angle={-360/14},radius=2.05];
  \pgfmathtruncatemacro{\j}{mod(\i+3,14)}
  \draw[ex edge] (v\i)--(v\j);
}
\foreach \x in {0,...,6}{
  \pgfmathtruncatemacro{\i}{2*\x}
  \pgfmathtruncatemacro{\j}{mod(2*\x+11,14)}
  \draw[ex matching] (v\i)--(v\j);
}
\foreach \i in {0,...,13}{
  \ifodd\i
    \pgfmathtruncatemacro{\g}{mod((\i+3)/2,7)}
    \node[ex B] at (v\i) {$\g$};
  \else
    \pgfmathtruncatemacro{\g}{\i/2}
    \node[ex A] at (v\i) {$\g$};
  \fi
}
\node[ex text] at (2.85,1.15)
  {$m=7,\quad d=4,\quad \gamma(G)=1$};
\node[ex text] at (2.85,.35)
  {$\rho_A=\rho_B=(0\ 1\ 3\ 2)$};
\node[ex text] at (2.85,-.20)
  {$\sigma(s)=3-s$};

\fill[black!85] (2.92,-1.05) circle (2.5pt);
\node[ex text] at (3.12,-1.05) {$A_g$};
\draw[draw=black!85,fill=white,line width=.6pt]
  (4.10,-1.05) circle (2.5pt);
\node[ex text] at (4.30,-1.05) {$B_g$};
\draw[ex matching] (2.85,-1.60)--(3.25,-1.60);
\node[ex text] at (3.43,-1.60) {$A_gB_g$};
\end{tikzpicture}
\caption{The cyclic group example
$H(\mathbb Z_7,\{0,1,2,3\})\cong C_{14}(1,3)$.}
\label{fig:example-cyclic}
\end{figure}

\section{Proof of the main theorem}\label{sec:completion}

Table~\ref{tab:cases} describes the three cases of the construction.

\begin{table}[H]
\centering
\begin{minipage}{0.86\linewidth}
\setlength{\abovecaptionskip}{0pt}
\setlength{\belowcaptionskip}{7pt}
\caption{Constructions for $m\geq d\geq4$ with $4\mid md$.}
\label{tab:cases}
\small
\renewcommand{\arraystretch}{1.25}
\begin{tabular*}{\linewidth}{@{\extracolsep{\fill}}cccl@{}}
\toprule
\textbf{Class size $m$} & \textbf{Degree $d$} & \textbf{Group} & \textbf{Construction}\\
\midrule
$2\mid m$  & $2\mid d$  & $\Dih_{m/2}$ & Proposition~\ref{prop:even-degree}\\
\addlinespace[2pt]
$4\mid m$  & $2\nmid d$ & $\Dih_{m/2}$ & Proposition~\ref{prop:odd-degree}\\
\addlinespace[2pt]
$2\nmid m$ & $4\mid d$  & $\Z_m$        & Proposition~\ref{prop:cyclic}\\
\bottomrule
\end{tabular*}
\end{minipage}
\end{table}

\begin{proof}[Proof of Theorem~\ref{thm:main}]
The necessity of $4\mid md$ follows from Euler's formula. Now suppose $4\mid md$. If $m$ and $d$ are even, use Proposition~\ref{prop:even-degree}. If $m$ is even and $d$ is odd, divisibility forces $4\mid m$, so Proposition~\ref{prop:odd-degree} applies. If $m$ is odd, then $4\mid d$, and Proposition~\ref{prop:cyclic} applies. Each construction gives a connected simple $d$-regular bipartite graph with $m$ vertices in each class and an orientable quadrangulation with simple $4$-cycles as face boundaries. Its vertex, edge, and face counts are $v=2m$, $e=md$, and $f=md/2$. Thus the surface on which $G$ embeds has genus $1+m(d-4)/4$, attaining the lower bound in~\eqref{eq:euler-bound}.
\end{proof}

\section{Further directions}\label{sec:directions}

Theorem~\ref{thm:main} settles the existence question when the graph may be chosen. However, for a given graph, the necessary conditions need not be sufficient. Pisanski demonstrated a connected $4$-regular bipartite graph with nine vertices in each class and girth four that has no orientable quadrangulation~\cite[p.~34]{Pisanski1980}. One may ask what structural conditions guarantee an orientable quadrangulation?

Moreover for $m\geq d\geq4$ with $4\nmid md$, one may seek an embedding attaining the rounded Euler bound $g_0=\lceil1+m(d-4)/4\rceil$. Writing $|f|$ for the length of a facial walk $f$, Euler's formula gives
\[
\sum_f\bigl(|f|-4\bigr)
=8\left(g_0-1-\frac{m(d-4)}4\right).
\]
Here the sum has one term for each face. For $md\equiv1,2,3\pmod4$, the sum is respectively $6,4,2$. Since every facial walk in a bipartite graph has even length, the last case requires one hexagon, the middle case two hexagons or one octagon, and the first case three hexagons, a hexagon and an octagon, or one decagon. All other faces must be quadrilaterals. Under what conditions can the previously given rotation systems be extended to obtain these face distributions?

For a nonorientable quadrangulation, Euler's formula gives genus $k=2+m(d-4)/2$, so $2\mid md$ is necessary. However, it is also insufficient: for $(m,d)=(5,4)$, the only graph is $K_{5,5}$ minus a perfect matching, which has no nonorientable quadrangulation~\cite[Theorem~3]{Mohar1988}, although Proposition~\ref{prop:cyclic} gives an orientable one. The nonorientable genus of nearly complete bipartite graphs is known~\cite{Mohar1988,SinghSun2023}. An extension of Pisanski's conjecture could ask which pairs $m\geq d\geq4$ admit a regular bipartite nonorientable quadrangulation.

We note that the freedom to choose the connection set is useful in this problem. We can choose its elements together
with the local rotations so that the facial walks have length four and both local permutations are cyclic. In the cyclic construction, the same label permutations work for every $m\geq d$. This motivates the question of whether similar choices give minimum-genus embeddings for
other families of Haar graphs.

On the other hand, for any fixed Haar graph the connection set is fixed,
and satisfying the conditions that make every facial walk return to its starting dart after exactly $4$ steps, while keeping each local permutation as a single cycle, can be much more difficult. It would be useful
to find structural conditions on a connection set that
guarantee suitable cyclic permutations. One can also ask which changes to the label permutations preserve
facial closure while combining their cycles. Such conditions or operations could extend these
constructions to prescribed graph families whose genus is unknown.

\section*{Acknowledgements}
This material is based upon work supported by the National Science Foundation Graduate
Research Fellowship Program under Grant No. DGE-2637800. Any opinions, findings,
and conclusions or recommendations expressed in this material are those of the author(s)
and do not necessarily reflect the views of the National Science Foundation.

\end{document}